\newtheorem{theorem}{Theorem}[section]
\newtheorem{lemma}[theorem]{Lemma}
\newtheorem{corollary}[theorem]{Corollary}
\theoremstyle{definition}
\theoremstyle{remark}
\newcommand{\sshf}[1]{\mathscr{O}_{#1}}
\newcommand{\shf}[1]{\mathscr{#1}}
\newcommand{\iso}{\simeq}
\newcommand{\ses}[3]{0\rightarrow#1\rightarrow#2\rightarrow#3\rightarrow{0}}
\newcommand{\dlc}[1]{\omega^{\bullet}_{#1}}
\newcommand{\dhom}[2]{R\text{Hom}(#1, #2)}
\numberwithin{equation}{section}
\begin{document}
\allowdisplaybreaks
\title[Kodaira vanishing for singular varieties revisited] {Kodaira
  vanishing on singular varieties revisited}

\author{Donu Arapura}
\author{Lei Song}

\address{Department of Mathematics, Purdue University,
150 N. University Street, West Lafayette, IN 47907, U.S.A.}
\email{arapura@math.purdue.edu}

\address{School of Mathematics, Sun Yat-sen University, Guangzhou, Guangdong 510275, P.R. China}
\email{songlei3@mail.sysu.edu.cn}

\thanks{The first author was supported in part by a grant from the
  Simons Foundation. }

\date{}

\dedicatory{}

\keywords{}

\begin{abstract}
We correct the proof and slightly strengthen a Kodaira-type vanishing
theorem for singular varieties originally due to Jaffe and the first
author. Specifically, we show that if $L$ is a nef and big line bundle
on a projective variety of characteristic zero, the $i^{\text{th}}$ cohomology of
$L^{-1}$ vanishes for $i$ in a range determined by the depth and dimension of the singular locus.
\end{abstract}

\maketitle

\section{Introduction}

Some years ago, David Jaffe and the first author considered a version of Kodaira's
vanishing theorem for a singular variety, where the bound involves its
depth and dimension of the singular locus \cite[Proposition 1.1]{ArapuraJaffe89}; a  slight
strengthening of this says:

\begin{theorem}\label{AJvanishing}
  Let $X$ be a projective variety over an algebraically closed field
  of characteristic 0. Let $k\ge0$ be an integer. Suppose
  $k<\text{codim}(X_{sing})$ and $X$ satisfies Serre's condition $S_{k+1}$. Then for any nef and big line bundle $L$ on $X$, we have
\begin{equation*}
    H^k(X, L^{-1})=0.
\end{equation*}
\end{theorem}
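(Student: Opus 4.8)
The plan is to glue Kawamata--Viehweg vanishing on a resolution to a depth estimate supplied by Serre's condition, the gluing taking place over the smooth locus. Write $Z=X_{sing}$, $U=X\setminus Z$, and let $j\colon U\hookrightarrow X$ be the inclusion. If $X$ is smooth the assertion is just Kawamata--Viehweg vanishing, so assume $X$ is singular; then $Z\neq\emptyset$ and $k<\operatorname{codim}(Z)\le\dim X=:n$. Since we are in characteristic $0$, Hironaka's theorem provides a resolution $\pi\colon\widetilde X\to X$ which is an isomorphism over $U$. Set $\widetilde L=\pi^*L$. As $\pi$ is birational and $L$ is nef and big, $\widetilde L$ is again nef and big on the smooth projective variety $\widetilde X$: nefness is preserved under pullback, and bigness follows from $h^0(\widetilde X,\widetilde L^{\otimes m})\ge h^0(X,L^{\otimes m})$, using the inclusion $\mathcal O_X\hookrightarrow\pi_*\mathcal O_{\widetilde X}$ and the projection formula.

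First I would dispose of the smooth model. By Kawamata--Viehweg vanishing on $\widetilde X$ one has $H^i(\widetilde X,\omega_{\widetilde X}\otimes\widetilde L)=0$ for $i>0$; dualizing gives $H^j(\widetilde X,\widetilde L^{-1})=0$ for all $j<n$. In particular $H^k(\widetilde X,\widetilde L^{-1})=0$, since $k<n$.

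The heart of the argument is a depth estimate along $Z$. Because $L^{-1}$ is locally isomorphic to $\mathcal O_X$, it suffices to control $\mathcal H^i_Z(\mathcal O_X)$. Fix $x\in Z$ and let $I_x\subset\mathcal O_{X,x}$ be the ideal of $Z$; since every component of $Z$ through $x$ has codimension $\ge k+1$, we have $\operatorname{ht}(I_x)\ge k+1$. Serre's condition $S_{k+1}$ is equivalent to the statement that $\operatorname{grade}(I,\mathcal O_{X,x})\ge\min\bigl(k+1,\operatorname{ht}(I)\bigr)$ for every ideal $I$, so $\operatorname{grade}(I_x,\mathcal O_{X,x})\ge k+1$ and hence $H^i_{I_x}(\mathcal O_{X,x})=0$ for $i\le k$. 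Therefore $\mathcal H^i_Z(\mathcal O_X)=0$, and so $\mathcal H^i_Z(L^{-1})=0$, for $i\le k$. Feeding this into the local-to-global spectral sequence $H^p\bigl(X,\mathcal H^q_Z(L^{-1})\bigr)\Rightarrow H^{p+q}_Z(X,L^{-1})$ yields $H^i_Z(X,L^{-1})=0$ for $i\le k$. Consequently, in the local cohomology long exact sequence the restriction map $\rho\colon H^k(X,L^{-1})\to H^k(U,L^{-1}|_U)$ is injective.

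Finally I would compare the two restriction maps. Since $\pi$ restricts to an isomorphism $\pi^{-1}(U)\xrightarrow{\ \sim\ }U$, functoriality of restriction gives a commuting triangle $\rho=\rho_{\widetilde X}\circ\pi^*$, where $\pi^*\colon H^k(X,L^{-1})\to H^k(\widetilde X,\widetilde L^{-1})$ and $\rho_{\widetilde X}\colon H^k(\widetilde X,\widetilde L^{-1})\to H^k(U,L^{-1}|_U)$. But $H^k(\widetilde X,\widetilde L^{-1})=0$ forces $\pi^*=0$, whence $\rho=0$; combined with the injectivity of $\rho$ this gives $H^k(X,L^{-1})=0$. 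The step I expect to be the main obstacle is the depth estimate: pinning down the precise equivalence between $S_{k+1}$ and the grade bound $\operatorname{grade}(I_x,\mathcal O_{X,x})\ge k+1$, and thereby the vanishing of the local cohomology sheaves $\mathcal H^i_Z$ in the stated range. This is exactly where the hypotheses on depth and on $\operatorname{codim}(X_{sing})$ are genuinely used, and presumably where the original argument needed correcting.
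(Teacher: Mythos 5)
Your proof is correct, but it takes a genuinely different route from the paper's. The paper dualizes first: by Grothendieck duality $H^k(X,L^{-1})^{\vee}\cong \mathbb{H}^{-k}(X,\omega^{\bullet}_X\otimes L)$, and the main work (Theorem \ref{degeneracy}) is a degeneration statement for the spectral sequence $H^j(X,h^i(\omega^{\bullet}_X)\otimes L)\Rightarrow \mathbb{H}^{i+j}$, proved by induction on dimension via general hyperplane sections, with $S_{k+1}$ entering only at the very end to force $h^{-k}(\omega^{\bullet}_X)=0$. You instead work directly with $L^{-1}$ and split the hypotheses differently: $S_{k+1}$ together with $\operatorname{codim}(X_{sing})\ge k+1$ gives $\operatorname{grade}(\mathcal{I}_Z,\mathcal{O}_X)\ge k+1$, hence $\mathcal{H}^q_Z(L^{-1})=0$ for $q\le k$ and injectivity of the restriction $H^k(X,L^{-1})\to H^k(U,L^{-1}|_U)$; then the factorization of that restriction through $H^k(\widetilde X,\pi^{*}L^{-1})$, which vanishes by Kawamata--Viehweg and Serre duality since $k<n$, finishes the job. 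The individual steps all check out: $\operatorname{grade}(I,M)=\min_{\mathfrak{p}\in V(I)}\operatorname{depth}M_{\mathfrak{p}}$, the local-to-global spectral sequence for $H^{\bullet}_Z$, and the compatibility of restriction with pullback over $\pi^{-1}(U)\cong U$ are standard. Your argument is shorter, entirely avoids the hyperplane-section induction that caused the gap in the original paper, and is the natural generalization of the classical proof of Mumford's $H^1$-vanishing for normal surfaces. What it does not yield is the paper's stronger Theorem \ref{degeneracy} and Corollary \ref{corollary}, i.e.\ the vanishing of $E^{i,j}_{\infty}$ (and, in low dimension, of $H^j(X,h^i(\omega^{\bullet}_X)\otimes L)$ itself) over the whole range $j>0$, $i+j>-\operatorname{codim}(X_{sing})$, which is extra information about the dualizing complex invisible from your side of the duality. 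One minor point: in the smooth case you invoke Kawamata--Viehweg directly, which still needs $k<\dim X$, so the convention for $\operatorname{codim}(\emptyset)$ deserves a word; this is harmless but worth stating.
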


Unfortunately, the argument given in \cite{ArapuraJaffe89} contains a
gap, because  it implicitly assumes that a general hyperplane section of an $S_{k+1}$ scheme remains
$S_{k+1}$. The purpose of this note is to fix the gap.
We use a different strategy involving Grothendieck duality. A small irony is  that this is
close to the  original approach that Jaffe and the first author used, although this proof never made it
to the final published version.

{\it Acknowledgments.} The second author would like to thank Osamu Fujino for pointing out \cite{EV84}.

\section{Notation and conventions}
We fix some terminology and notation.
\vspace{0.05cm}

(2.1) Given a projective scheme $X$ over a field, it admits a dualizing complex $\dlc{X}\in D^{+}(\text{QCoh})(X))$ with coherent cohomology sheaves \cite{Har66}. For a normalized $\dlc{X}$, the sheaf $\omega_X=h^{-\dim X}(\dlc{X})$ is called a dualizing sheaf.
\vspace{0.05cm}

(2.2) For a complex $\shf{F}^{\bullet}$ of sheaves on $X$,
$h^i(\shf{F}^{\bullet})$ denotes its $i$-th cohomology sheaf and
$\mathbb{H}^i(\shf{F}^{\bullet})$ denotes the hypercohomology $R^i\Gamma(\shf{F}^{\bullet})=h^i(R\Gamma(\shf{F}^{\bullet}))$. We will use $H^i(-)$ to denote the usual cohomology for sheaves. $R\text{Hom}(\shf{F}, -)$ is the derived functor of $\text{Hom}(\shf{F}, -)$.
\vspace{0.05cm}

(2.3) Let $X$ be a projective variety and given a line bundle $L$. We
say $L$ is nef, if for any proper morphism $f: C\rightarrow X$ from a
smooth projective curve over the base field, $\deg{f^*L}\ge 0$. We say
$L$ is big, if for any resolution of singularities $\mu: X'\rightarrow
X$, the pullback $\mu^*L$ is big on $X'$. Note that this is independent of $X'$.

(2.4) Let $X$ be a projective variety and $L$ a big line bundle. There exists a proper closed subset $V=V(L)\subset X$ with the property that if $Y\subset X$ is a subvariety not contained in $V$, then $L|_Y$ is also big (\cite[Corollary 2.2.11]{Laz04I}).

\section{Proof of theorem}
We work over an algebraically closed field of characteristic zero.

We have a second quadrant spectral sequence
\begin{equation}\label{spectral sequence}
    E^{i, j}_{2}=H^j(X, h^{i}(\dlc{X})\otimes L)\Rightarrow \mathbb{H}^{i+j}(X, \dlc{X}\otimes L).
\end{equation}
The key technical result is

\begin{theorem}\label{degeneracy}
Let $X$ be a projective variety and $L$ a nef and big line bundle on
$X$. Then for any $i, j$ such that $j>0$ and
$i+j>-\text{codim}(X_{\text{Sing}})$, we have
\begin{equation*}
    E^{i, j}_{\infty}=0.
\end{equation*}
\end{theorem}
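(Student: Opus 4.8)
The plan is to control $\dlc{X}$ by comparing it with the pushforward of the dualizing complex of a resolution, where Kawamata--Viehweg vanishing is available, thereby isolating the genuinely singular contribution supported on $X_{\text{Sing}}$. Fix a resolution $\mu\colon X'\to X$ with $X'$ smooth projective, $n=\dim X=\dim X'$. Since $X'$ is smooth, $\dlc{X'}=\omega_{X'}[n]$, and Grauert--Riemenschneider vanishing gives $R^q\mu_*\omega_{X'}=0$ for $q>0$, so $R\mu_*\dlc{X'}\simeq\mu_*\omega_{X'}[n]$. As $\mu^*L$ is nef and big on the smooth variety $X'$, Kawamata--Viehweg vanishing (in the form of \cite{EV84}) yields $H^q(X',\omega_{X'}\otimes\mu^*L)=0$ for $q>0$. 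Combining this with the projection formula and the Leray spectral sequence,
\[ \mathbb{H}^m(X,\mu_*\omega_{X'}[n]\otimes L)\cong H^{m+n}(X',\omega_{X'}\otimes\mu^*L)=0\qquad\text{for }m>-n. \]

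Next I would use Grothendieck duality. The trace map $R\mu_*\dlc{X'}\to\dlc{X}$ becomes $t\colon\mu_*\omega_{X'}[n]\to\dlc{X}$, which on the lowest cohomology sheaf is the canonical map $\mu_*\omega_{X'}\to\omega_X$; this is injective because $\mu_*\omega_{X'}$ is torsion-free on the integral scheme $X$ and $t$ is an isomorphism over the smooth locus. Let $C=\mathrm{cone}(t)$. Its cohomology sheaves are $h^i(C)=h^i(\dlc{X})$ for $i>-n$ and $h^{-n}(C)=\omega_X/\mu_*\omega_{X'}$, all supported on $X_{\text{Sing}}$; thus $C$ is a complex supported on $Z:=X_{\text{Sing}}$, of dimension $d:=\dim Z=n-\text{codim}(X_{\text{Sing}})$. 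One may moreover identify $C\simeq\mathbb{D}_X(D[-1])$ with $D=\mathrm{cone}(\mathcal{O}_X\to R\mu_*\mathcal{O}_{X'})$ and $\mathbb{D}_X(-)=R\mathcal{H}om(-,\dlc{X})$, which will be convenient below.

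From the triangle $\mu_*\omega_{X'}[n]\to\dlc{X}\to C$ and the vanishing above, the long exact sequence gives $\mathbb{H}^m(X,\dlc{X}\otimes L)\cong\mathbb{H}^m(X,C\otimes L)$ for $m>-n$. More holds at the level of spectral sequences: the map $\dlc{X}\to C$ induces a morphism from the hypercohomology spectral sequence of $\dlc{X}\otimes L$ to that of $C\otimes L$, and by the previous paragraph this morphism is an isomorphism on $E_2^{i,j}$ for every $j\ge1$ (the sheaves $h^i$ agree for $i>-n$, while for $i=-n$ the map $H^j(\omega_X\otimes L)\to H^j((\omega_X/\mu_*\omega_{X'})\otimes L)$ is an isomorphism for $j\ge1$ since $H^j(\mu_*\omega_{X'}\otimes L)=0$). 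The only discrepancy sits in the bottom row $j=0$, which receives differentials but emits none; a standard comparison then gives $E_\infty^{i,j}(\dlc{X})\cong E_\infty^{i,j}(C)$ for all $j\ge1$. It therefore suffices to prove $E_\infty^{i,j}(C)=0$ for $j>0$ and $i+j>d-n$.

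The main obstacle is this last vanishing for $C$. Since $C$ is supported on $Z$ (so $h^i(C)$ has support of dimension $\le\min(-i,d)$), Grothendieck vanishing already gives $E_2^{i,j}(C)=0$ for $i+j>0$ and for $j>d$; the surviving terms lie in the band $d-n<i+j\le0$, $1\le j\le d$, where the $E_2$ terms need not vanish, so the content is genuinely that they die in the spectral sequence. I would extract this from the identification $C\simeq\mathbb{D}_X(D[-1])$: Grothendieck--Serre duality converts the vanishing of the higher-$j$ graded pieces of $\mathbb{H}^\bullet(C\otimes L)$ into the vanishing of the low-degree sheaf-cohomology pieces of $\mathbb{H}^\bullet(D\otimes L^{-1})$, i.e. a Kodaira-type statement asserting that $H^q(-,-\otimes L^{-1})$ vanishes below the dimension of support for the sheaves $R^p\mu_*\mathcal{O}_{X'}$. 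Because $Z$ may be reducible, singular, and partly contained in the locus $V(L)$ of $(2.4)$, I expect to prove this by induction on $\dim Z$: stratify $Z$, apply $(2.4)$ so that $L$ restricts to a nef and big bundle on each stratum not contained in $V(L)$, invoke the lower-dimensional case on the strata inside $V(L)$, and run Serre duality stratum by stratum. Making this stratification-and-induction argument precise, in particular controlling the passage between the global spectral sequence and the restrictions to strata, is the crux of the proof.
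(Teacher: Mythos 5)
Your first half is a genuinely different and essentially correct reduction: comparing $\dlc{X}$ with $\mu_*\omega_{X'}[n]$ via the trace map, killing the latter's contribution with Grauert--Riemenschneider and Kawamata--Viehweg, and running the spectral-sequence comparison to conclude $E^{i,j}_{\infty}(\dlc{X}\otimes L)\iso E^{i,j}_{\infty}(C\otimes L)$ for $j\ge 1$, where $C$ is the cone of the trace map. (The comparison does work: the only spot in the bottom row where the map of $E_2$-pages fails to be injective is $(-n,0)$, and no differential from a row $j\ge 1$ can land there.) This isolates the theorem's content in the complex $C$ supported on $X_{\text{Sing}}$, which is a reasonable alternative to the paper's strategy. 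But the proof stops exactly where the difficulty begins: the vanishing of $E^{i,j}_{\infty}(C\otimes L)$ in the band $-\text{codim}(X_{\text{Sing}})<i+j\le 0$, $1\le j\le \dim(X_{\text{Sing}})$, is asserted as "the crux" and only a plan is offered. That plan has concrete problems. First, Grothendieck duality identifies the \emph{total} hypercohomology $\mathbb{H}^m(X,C\otimes L)^{\vee}$ with $\mathbb{H}^{-m-1}(X,D\otimes L^{-1})$, but it does not match the filtration coming from the hypercohomology spectral sequence of $C\otimes L$ (built from the sheaves $h^i(C)$) with the one for $D\otimes L^{-1}$ (built from the sheaves $R^p\mu_*\sshf{X'}$); these are two different spectral sequences related only through local duality in a derived, not term-by-term, fashion, so the claimed conversion of "$E^{i,j}_{\infty}(C\otimes L)=0$ for $j>0$" into low-degree vanishing for $D\otimes L^{-1}$ does not follow. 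Second, even granting that conversion, the target statement --- Kodaira-type vanishing for the coherent sheaves $R^p\mu_*\sshf{X'}$ twisted by $L^{-1}$ on the possibly reducible, non-reduced-support, very singular locus $Z$ --- is not a known result and is at least as hard as the theorem you are trying to prove; and the proposed "stratify $Z$ and run Serre duality stratum by stratum" is not set up as an induction on any precise statement, and restricting these sheaves to strata creates extension problems you do not address.

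For comparison, the paper avoids the resolution entirely and inducts on $\dim X$ using a general hyperplane section $D\in |H|$: applying $\dhom{-}{\dlc{X}}$ to $\ses{\sshf{X}(-H)\otimes L^{-1}}{L^{-1}}{i_*\sshf{D}\otimes L^{-1}}$ and Grothendieck duality for $i:D\embedding X$ gives a long exact sequence relating $\mathbb{H}^{\bullet}(\dlc{D}\otimes L|_D)$, $\mathbb{H}^{\bullet}(\dlc{X}\otimes L)$, and $\mathbb{H}^{\bullet}(\dlc{X}\otimes\sshf{X}(H)\otimes L)$; the third term is controlled by Serre vanishing (so its spectral sequence degenerates onto the bottom row), the first by the induction hypothesis on the $(n-1)$-dimensional variety $D$ (whose singular locus has codimension at least that of $X$), and the base case $n=2$ follows from the support bound $\dim\supp{h^i(\dlc{X})}\le -i$ together with $\mathbb{H}^0(\dlc{X}\otimes L)\iso H^0(X,L^{-1})^{\vee}=0$. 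If you want to salvage your approach, you would need an honest argument for the band of $E_{\infty}$-terms of $C\otimes L$; the hyperplane-section induction is one way to supply it, at which point the resolution step becomes unnecessary.
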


Chasing the spectral sequence (\ref{spectral sequence}) more carefully, one can say something more.

\begin{corollary}\label{corollary}
Let $X$ be a projective variety and $L$ a nef and big line bundle on $X$.
\begin{enumerate}
  \item The differential
   $H^0(X, h^{-1}(\dlc{X})\otimes L)\rightarrow H^2(X,
   h^{-2}(\dlc{X})\otimes L)$ of \eqref{spectral sequence}
   is zero.
  \item Suppose $\dim X\le 3$. Then for any $i, j$ such that $j>0,
    i+j>-\text{codim}(X_{\text{Sing}})$, we have  that
\begin{equation*}
    h^j(X, h^{i}(\dlc{X})\otimes L)=0.
\end{equation*}

\end{enumerate}
\end{corollary}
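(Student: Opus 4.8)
The plan is to upgrade the $E_\infty$-vanishing of Theorem \ref{degeneracy} to genuine $E_2$-vanishing by combining three ingredients: Grothendieck vanishing applied to the small-dimensional supports of the sheaves $h^i(\dlc{X})$, a Kawamata--Viehweg type vanishing for the dualizing sheaf obtained through a resolution, and a single differential-chasing step that is available precisely because $\dim X\le 3$. Throughout I write $E_2^{i,j}=H^j(X,h^i(\dlc{X})\otimes L)$, so part (2) asserts the vanishing of these $E_2$-terms in the stated range.

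First I would record the support estimates. For a variety of dimension $n$ one has $h^0(\dlc{X})=0$ and, more generally, $\dim\supp{h^{-i}(\dlc{X})}\le i-1$ for $1\le i<n$, with $h^{-n}(\dlc{X})=\omega_X$: a point $x$ in the support of $h^{-i}(\dlc{X})$ with $1\le i<n$ is necessarily non-generic (by generic smoothness in characteristic zero, $h^{-i}(\dlc{X})$ vanishes near the generic point for $i\ne n$), so local duality gives $\operatorname{depth}\mathscr{O}_{X,x}\le i-\dim\overline{\{x\}}$ and reducedness forces $\operatorname{depth}\mathscr{O}_{X,x}\ge1$, whence $\dim\overline{\{x\}}\le i-1$. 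By Grothendieck vanishing, $E_2^{i,j}=0$ whenever $j>\dim\supp{h^i(\dlc{X})}$. For $n\le 3$ this already kills every term appearing in part (2) except those in the column $i=-n$ (the $\omega_X$-row) and, when $n=3$, the single term $E_2^{-2,1}$.

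The key technical input is the vanishing of the $\omega_X$-row. Choose a resolution $\mu\colon X'\to X$; then $\mu^*L$ is nef (pullbacks of nef bundles are nef) and big (by (2.3)), so Kawamata--Viehweg vanishing (\cite{EV84}) gives $H^i(X',\omega_{X'}\otimes\mu^*L)=0$ for $i>0$, while Grauert--Riemenschneider gives $R^q\mu_*\omega_{X'}=0$ for $q>0$. The projection formula then yields $H^i(X,\mu_*\omega_{X'}\otimes L)=0$ for $i>0$. Feeding the exact sequence $0\to\mu_*\omega_{X'}\to\omega_X\to Q\to 0$, whose cokernel $Q$ is supported on the non-rational locus inside $X_{\text{sing}}$, into the long exact sequence and applying Grothendieck vanishing to $Q$, I obtain
\begin{equation*}
H^i(X,\omega_X\otimes L)=0\qquad\text{for } i>\dim X_{\text{sing}}.
\end{equation*}
Since $\dim X_{\text{sing}}<n$, in particular $H^n(X,\omega_X\otimes L)=0$. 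This is exactly the required vanishing of $E_2^{-n,j}$ for $j>\dim X_{\text{sing}}$, and it also disposes of part (1): the target $E_2^{-2,2}=H^2(X,h^{-2}(\dlc{X})\otimes L)$ vanishes --- by the support bound when $n\ge3$, as the top cohomology $H^2(X,\omega_X\otimes L)$ when $n=2$, and trivially when $n\le1$ --- so the differential mapping into it is automatically zero.

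Finally, for $n=3$ the one surviving term is $E_2^{-2,1}=H^1(X,h^{-2}(\dlc{X})\otimes L)$, which must vanish when $\dim X_{\text{sing}}\le 1$. Here I would chase the spectral sequence directly: the spot $(-2,1)$ receives no differentials and has a single outgoing one, $d_2\colon E_2^{-2,1}\to E_2^{-3,3}$, so $E_\infty^{-2,1}=\ker d_2$. By Theorem \ref{degeneracy} (applicable since $j=1>0$ and $i+j=-1>-\operatorname{codim}(X_{\text{sing}})$ when $\dim X_{\text{sing}}\le1$) we have $E_\infty^{-2,1}=0$; and the target $E_2^{-3,3}=H^3(X,\omega_X\otimes L)$ has just been shown to vanish, so $d_2=0$ and hence $E_2^{-2,1}=\ker d_2=0$. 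I expect the main obstacle to be exactly this interplay of the two inputs: the top row must be killed \emph{before} the differential argument can eliminate the middle term, and it is the hypothesis $\dim X\le3$ that guarantees there is only one such middle term and that its only nontrivial differential lands in the already-controlled $\omega_X$-row. For $n\ge4$ the middle-degree terms $H^j(X,h^i(\dlc{X})\otimes L)$ with $0<j<-i$ proliferate and are no longer reachable by this method, which is why the statement is restricted to threefolds.
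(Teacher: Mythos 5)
Your proposal is correct, but it takes a genuinely different route from the one in the paper. The pivot is your sharpened support estimate $\dim \supp{h^{-i}(\dlc{X})}\le i-1$ for $0\le i<\dim X$, which extracts the full strength of reducedness ($S_1$, hence depth at least $1$ at every non-generic point) through local duality; the paper only uses the coarser fact that these sheaves are supported on $X_{\text{Sing}}$ (Lemma \ref{Kodaira vanishing}). With your estimate, Grothendieck vanishing kills every $E_2$-term in the range of part (2) except the $\omega_X$-column and, for threefolds, the single spot $E_2^{-2,1}$; the former is exactly Lemma \ref{Kodaira vanishing} (which you re-derive rather than cite --- note that the injectivity of $\mu_*\omega_{X'}\to\omega_X$ should be justified by torsion-freeness plus the generic isomorphism, as in that lemma), and the latter you dispose of by observing that $E^{-2,1}_{\infty}=\ker\paren{d_2\colon E_2^{-2,1}\to E_2^{-3,3}}$, that Theorem \ref{degeneracy} makes this kernel zero when $\text{codim}(X_{\text{Sing}})\ge 2$, and that the target $H^3(X,\omega_X\otimes L)$ already vanishes; this is a clean dual of the paper's surjectivity argument for $E_2^{-2,2}$, and unlike the paper's proof it actually covers all the cases rather than leaving some to the reader. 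For part (1), your observation that the target $H^2(X,h^{-2}(\dlc{X})\otimes L)$ always vanishes (by the support bound when $\dim X\ge 3$, by Lemma \ref{Kodaira vanishing} when $\dim X=2$) makes the stated claim immediate; by contrast, the paper's hyperplane-section argument (the exact sequence \eqref{exact sequence of hypercohomology} together with a Bertini choice of $D$ avoiding the associated primes of $h^{-1}(\dlc{X})$) establishes the stronger fact $E^{-1,0}_{\infty}=E^{-1,0}_{2}$, i.e.\ that \emph{every} differential out of $E_2^{-1,0}$ vanishes, which is the form in which (1) is then fed into (2). In short, your approach buys simplicity and completeness at the $E_2$-level, at the cost of not recovering that extra degeneration statement at the spot $(-1,0)$; for the corollary as literally stated, your argument is sufficient.
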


We start with some lemmas.

\begin{lemma}\label{Kodaira vanishing}
Let $X$ be a projective variety and $L$ a nef and big line bundle on $X$. Then for $j>\dim{(X_{\text{Sing}})}$ and any $i\in \mathbb{Z}$,
\begin{equation*}
    H^j(X, h^{i}(\dlc{X})\otimes L)=0.
\end{equation*}
\end{lemma}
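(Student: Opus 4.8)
The plan is to treat the cohomology sheaves $h^i(\dlc{X})$ case by case, isolating the single index $i=-\dim X$, where the positivity of $L$ is essential, from the remaining indices, which are controlled purely by the support of $h^i(\dlc{X})$. Set $n=\dim X$ and $s=\dim X_{\text{Sing}}$.

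First I would dispose of $i\neq -n$. Since $\dlc{X}$ is normalized, $h^i(\dlc{X})=0$ for $i<-n$, so only $i\ge -n$ is at issue. For $i>-n$ the sheaf $h^i(\dlc{X})$ is supported on the non-Cohen--Macaulay locus of $X$, which is a closed subset of $X_{\text{Sing}}$; hence $\dim\operatorname{supp}(h^i(\dlc{X})\otimes L)\le s$. Grothendieck's vanishing theorem then gives $H^j(X,h^i(\dlc{X})\otimes L)=0$ for every $j>s$, as desired.

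The real content is the case $i=-n$, where $h^{-n}(\dlc{X})=\omega_X$ and I must show $H^j(X,\omega_X\otimes L)=0$ for $j>s$. Choose a resolution $\mu\colon X'\to X$ with $X'$ smooth projective and $\mu$ an isomorphism over the regular locus $X\setminus X_{\text{Sing}}$. Then $\mu^*L$ is again nef and big (bigness is the very definition in (2.3), and nefness is immediate from the curve criterion), so Kawamata--Viehweg vanishing on the smooth variety $X'$ yields $H^j(X',\omega_{X'}\otimes\mu^*L)=0$ for $j>0$. By Grauert--Riemenschneider $R^q\mu_*\omega_{X'}=0$ for $q>0$, so the projection formula and the resulting degeneration of the Leray spectral sequence identify $H^j(X',\omega_{X'}\otimes\mu^*L)$ with $H^j(X,\bar\omega_X\otimes L)$, where $\bar\omega_X:=\mu_*\omega_{X'}$. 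Thus $H^j(X,\bar\omega_X\otimes L)=0$ for all $j>0$.

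Finally I would transfer this to $\omega_X$ through the trace map $t\colon\bar\omega_X\to\omega_X$. Since $\mu$ is an isomorphism over $X\setminus X_{\text{Sing}}$, the map $t$ is an isomorphism there, so $\ker t$ and $\operatorname{coker} t$ are both supported on $X_{\text{Sing}}$ and have dimension $\le s$. Splitting $t$ into the two short exact sequences $0\to\ker t\to\bar\omega_X\to\operatorname{im} t\to 0$ and $0\to\operatorname{im} t\to\omega_X\to\operatorname{coker} t\to 0$, tensoring with the flat sheaf $L$, and running the long exact cohomology sequences, the kernel and cokernel terms vanish in degrees $>s$ by Grothendieck vanishing while the $\bar\omega_X$ term vanishes in degrees $>0$; since $s\ge 0$ whenever $X_{\text{Sing}}\neq\varnothing$, this forces $H^j(X,\omega_X\otimes L)=0$ for $j>s$ and completes the argument. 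I expect this last step to be the main obstacle: one must control the trace map without assuming $X$ normal or with rational singularities, ensuring that its kernel as well as its cokernel is confined to $X_{\text{Sing}}$, and one must check that Grauert--Riemenschneider together with the Leray degeneration faithfully descends the Kawamata--Viehweg vanishing from $X'$ to $X$.
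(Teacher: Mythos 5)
Your proposal is correct and follows essentially the same route as the paper: dispose of $i>-\dim X$ by noting $h^i(\dlc{X})$ is supported in $X_{\text{Sing}}$ and applying Grothendieck vanishing, then handle $\omega_X$ by comparing it with $\mu_*\omega_{X'}$ (whose twisted higher cohomology vanishes by Grauert--Riemenschneider plus Kawamata--Viehweg, the alternative the paper itself mentions alongside its citation of Esnault--Viehweg) via a map that is an isomorphism off $X_{\text{Sing}}$. The only cosmetic difference is that the paper observes the trace map $\mu_*\omega_{X'}\to\omega_X$ is injective (torsion-freeness plus generic isomorphism), so a single short exact sequence suffices, whereas you carry the kernel term along harmlessly.
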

\begin{proof}
Let $n=\dim X$. For $i>-n$, the coherent sheaf $h^i(\dlc{X})$ is supported on $X_{\text{Sing}}$, so the vanishing is automatic. For $i=-n$, $h^{-n}(\dlc{X})\iso \omega_X$. Take a resolution of singularities $f: Y\rightarrow X$. The natural morphism $f_*\omega_Y\rightarrow \omega_X$ is injective, because $f_*\omega_Y$ is torsion free and the map is generically isomorphic. Consider the exact sequence
\begin{equation*}
    \ses{f_*\omega_Y}{\omega_X}{Q},
\end{equation*}
where the quotient sheaf $Q$ is supported on $X_{\text{Sing}}$. By \cite[Th{\'e}or{\`e}me 3.1]{EV84}, $H^j(X, f_*\omega_Y\otimes L)=0$ for all $j>0$ (The vanishing also follows from Grauert-Riemenschneider vanishing and Kawamata-Viehweg vanishing). So the assertion follows.
\end{proof}

\begin{lemma}\label{base case}
If $X$ is a 2-dimensional projective variety and $L$ a nef and big
line bundle on $X$. Then for any $i, j$ such that $j>0,
i+j>-\text{codim}(X_{\text{Sing}})$, we have
\begin{equation*}
    H^j(X, h^i(\dlc{X})\otimes L)=0.
\end{equation*}
\end{lemma}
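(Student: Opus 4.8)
The plan is to dispatch every case in the stated range by Lemma~\ref{Kodaira vanishing} together with an elementary dimension count, except for a single cohomology group, which I will then kill by dualizing the whole complex $\dlc{X}$ rather than any one of its cohomology sheaves.

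First I would narrow the range. Since $X$ is integral it is reduced, so $h^0(\dlc{X})=0$: at a closed point $x$ the stalk is Matlis dual to $H^0_{\mathfrak m}(\mathscr{O}_{X,x})$, which vanishes because $\mathscr{O}_{X,x}$ is a domain. As $h^i(\dlc{X})=0$ for $i\notin[-2,0]$, only $i=-2$ (with $h^{-2}(\dlc{X})\iso\omega_X$) and $i=-1$ contribute. For $i=-2$ the group $H^j(X,\omega_X\otimes L)$ vanishes throughout the range by Lemma~\ref{Kodaira vanishing}: if $\dim X_{\text{Sing}}=0$ this covers $j=1,2$, while if $\dim X_{\text{Sing}}=1$ the condition $i+j>-\text{codim}(X_{\text{Sing}})$ forces $j=2>\dim X_{\text{Sing}}$. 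For $i=-1$ the sheaf $h^{-1}(\dlc{X})$ is supported on $X_{\text{Sing}}$, of dimension $\le 1$, so its $H^2$ vanishes for dimension reasons, and if $\dim X_{\text{Sing}}=0$ then its $H^1$ does too. The one surviving case is
\[
H^1\!\left(X,\,h^{-1}(\dlc{X})\otimes L\right)=0,\qquad\text{when }\dim X_{\text{Sing}}=1 .
\]

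Here the direct dual $H^0(X,\mathcal{E}xt^{-1}(h^{-1}(\dlc{X}),\dlc{X})\otimes L^{-1})$ involves a sheaf on the singular curve whose sections are not visibly controlled by the bigness of $L$; this is the real obstacle. I would avoid it by keeping the full dualizing complex. Grothendieck--Serre duality applied to $L^{-1}$ gives $R\mathcal{H}om(L^{-1},\dlc{X})\iso\dlc{X}\otimes L$, hence
\[
\mathbb{H}^k(X,\dlc{X}\otimes L)\iso H^{-k}(X,L^{-1})^{\vee}\quad\text{for all }k .
\]
Because $h^0(\dlc{X})=0$ and $h^i(\dlc{X})=0$ for $i<-2$, the canonical truncation yields a distinguished triangle
\[
\omega_X[2]\longrightarrow\dlc{X}\longrightarrow h^{-1}(\dlc{X})[1]\xrightarrow{\ +1\ } .
\]

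Finally I would tensor this triangle with $L$, take the long exact hypercohomology sequence, and read off cohomological degree $k=0$:
\[
H^2(X,\omega_X\otimes L)\to H^0(X,L^{-1})^{\vee}\to H^1(X,h^{-1}(\dlc{X})\otimes L)\to H^3(X,\omega_X\otimes L),
\]
the term $H^0(X,L^{-1})^{\vee}$ being identified by the duality above. The right-hand term vanishes because $\dim X=2$, and $H^0(X,L^{-1})=0$ since a nef and big line bundle on a positive-dimensional integral projective variety has no effective inverse (pulled back to a resolution, an effective $(\mu^{*}L)^{-1}$ would force $(\mu^{*}L)^2\le 0$, against bigness). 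Trapped between two zeros, $H^1(X,h^{-1}(\dlc{X})\otimes L)=0$. The crux is precisely this maneuver: attacking the surviving group through the whole complex $\dlc{X}$, where duality collapses cleanly onto $L^{-1}$, rather than through its truncation piece.
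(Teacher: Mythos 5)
Your proof is correct and is essentially the paper's own argument: both reduce every case except $H^1(X,h^{-1}(\dlc{X})\otimes L)$ when $\operatorname{codim}(X_{\mathrm{Sing}})=1$ to Lemma~\ref{Kodaira vanishing} (plus $h^0(\dlc{X})=0$ and dimension reasons), and both kill that last group by Grothendieck duality, which identifies $\mathbb{H}^{0}(X,\dlc{X}\otimes L)$ with $H^{0}(X,L^{-1})^{\vee}=0$. Your truncation triangle is just the two-column instance of the spectral sequence \eqref{spectral sequence} used in the paper, with $H^{3}(X,\omega_X\otimes L)=0$ playing the role of the vanishing outgoing differential, and you are slightly more explicit than the published proof about the inputs $h^{0}(\dlc{X})=0$ and $H^{0}(X,L^{-1})=0$.
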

\begin{proof}
If $\text{codim}(X_{\text{Sing}})=1$, by Lemma \ref{Kodaira vanishing},
\begin{equation*}
    E^{i, j}_2=0
\end{equation*}
if $j\ge 2$. By Grothendieck duality \cite{Har66},
$H^0(X,\dlc{X}\otimes L)= H^0(X, L^{-1})=0$. Therefore $E^{i, j}_{\infty}=0$ for $i+j=0$. This implies that $E^{-1, 1}_2=0$.

If $\text{codim}(X_{\text{Sing}})=2$, by Lemma \ref{Kodaira vanishing} again,
\begin{equation*}
    E^{i, j}_2=0
\end{equation*}
if $j>0$.
\end{proof}

\begin{proof}[Proof of Theorem \ref{degeneracy}]
We proceed by induction on $n=\dim X$. The $n=2$ case is proved in Lemma \ref{base case}. Fix a projective variety $X$ with $n\ge 3$ and a big and nef line bundle $L$ on $X$. Put $k=-(i+j)$, so $k<\text{codim}(X_{\text{Sing}})$. Suppose the statement is true for any projective varieties of dimension $n-1$. Fix a sufficiently ample Cartier divisor $H$ and $D\in |H|$ with the following properties:
 \begin{itemize}
   \item[(P1)] $D$ is integral and does not contain a component of
     $X_{sing}$;
   \item[(P2)] $L|_D$ is big and nef;
   \item[(P3)] $H^i(X, h^j(\dlc{X})\otimes \sshf{X}(H)\otimes L)=0$ for all $i>0$ and $j\in \mathbb{Z}$.
 \end{itemize}
(P1) is by the Bertini theorem, and (P2) is by (2.4). (P3) is true because $h^j(\dlc{X})$ are coherent and only finitely many of them are nonzero. 

Consider the short exact sequence of $\sshf{X}$-modules
\begin{equation*}
    \ses{\sshf{X}(-H)\otimes L^{-1}}{L^{-1}}{i_*\sshf{D}\otimes L^{-1}}.
\end{equation*}
By applying $\dhom{-}{\dlc{X}}=R\Gamma\circ R\shf{H}om(-, \dlc{X})$ and taking the cohomology, we get the exact sequence of vector spaces
\begin{eqnarray*}
    \cdots\rightarrow {h}^{-k}(\dhom{i_*\sshf{D}\otimes L^{-1}}{ \dlc{X}})\rightarrow
    {h}^{-k}(\dhom{L^{-1}}{\dlc{X}})\rightarrow \\
    {h}^{-k}(\dhom{\sshf{X}(-H)\otimes L^{-1}}{\dlc{X}})\rightarrow \cdots
\end{eqnarray*}
Using the fact that $L$ is invertible and the Grothendieck duality
for $i: D\hookrightarrow X$ \cite{Har66}, we arrive at
\begin{equation}\label{exact sequence of hypercohomology}
    \cdots\rightarrow \mathbb{H}^{-k}(\dlc{D}\otimes L|_D)\rightarrow \mathbb{H}^{-k}(\dlc{X}\otimes L)\rightarrow \mathbb{H}^{-k}(\dlc{X}\otimes\sshf{X}(H)\otimes L)\rightarrow \cdots
\end{equation}
The hypercohomology above can be computed by a spectral sequence (\ref{spectral sequence}):
\begin{equation*}
    'E^{p, q}_2:=H^q(D, h^p(\dlc{D})\otimes \otimes L|_D)\Rightarrow \mathbb{H}^{p+q}(\dlc{D}\otimes L|_D).
\end{equation*}
Similarly we denote by $E^{\bullet, \bullet}$ the spectral sequence for $\mathbb{H}^{\bullet}(\dlc{X}\otimes L)$ and $''E^{\bullet, \bullet}$ for $\mathbb{H}^{\bullet}(\dlc{X}\otimes\sshf{X}(H)\otimes L)$ respectively.

By (P3), $''E^{\bullet, \bullet}$ degenerates at the second page to
give an isomorphism
\begin{equation*}
    \mathbb{H}^{-k}(\dlc{X}\otimes\sshf{X}(H)\otimes L)\iso ''E^{-k, 0}_{\infty}\iso H^0(X, h^{-k}(\dlc{X})\otimes\sshf{X}(H)\otimes L).
\end{equation*}
(P1) implies that $\text{codim}(D_{\text{Sing}})\ge\text{codim}(X_{\text{Sing}})$. By the induction hypothesis, we have for $q>0$ and $p+q\ge -k$,
\begin{equation*}
    'E^{p, q}_{\infty}=0.
\end{equation*}
By looking at the $q$-th graded pieces with respect to the natural filtration of the hypercohomology in (\ref{exact sequence of hypercohomology}), we deduce that for $q>0$ and $p+q\ge -k$,
\begin{equation*}
    E^{p, q}_{\infty}=0,
\end{equation*}
as desired.
\end{proof}

Keep the notation introduced above in the rest of the section.

\begin{proof}[Proof of Theorem \ref{AJvanishing}]
By  Grothendieck duality  \cite{Har66} and Theorem \ref{degeneracy}, we have
\begin{equation*}
    H^k(X, L^{-1})^{\vee}\iso \mathbb{H}^{-k}(\dlc{X}\otimes L)\iso E^{-k, 0}_{\infty}\subseteq \cdots \subseteq E^{-k, 0}_2=H^0(X, h^{-k}(\dlc{X})\otimes L).
\end{equation*}
Thanks to Serre's condition $S_{k+1}$, $h^{-k}(\dlc{X})=0$; so the assertion follows.
\end{proof}

\begin{proof}[Proof of Corollary \ref{corollary}]
(1) Consider the commutative diagram
$$\xymatrix{
E^{-1, 0}_{\infty} \ar@{^(->}[d] \ar[r] &  H^0(X, h^{-1}(\dlc{X})\otimes \sshf{X}(H)\otimes L)\ar@{=}[d]\ar[r] & 0,\\
H^0(X, h^{-1}(\dlc{X})\otimes L)\ar[r]^{\alpha} &  H^0(X, h^{-1}(\dlc{X})\otimes \sshf{X}(H)\otimes L) & }$$
where the first row is exact, coming from (\ref{exact sequence of hypercohomology}); and the left column map is an inclusion because
\begin{equation*}
    E^{-1, 0}_{\infty}\subseteq\cdots \subseteq E^{-1, 0}_{2}=H^0(X, h^{-1}(\dlc{X})\otimes L).
\end{equation*}
We claim that the map $\alpha$ is injective for general $D$. In fact, if $D\in |H|$ is general such that $D$ does not contain any scheme defined by an associate prime of $h^{-1}(\dlc{X})$, then we have the exact sequence
\begin{equation*}
    \ses{h^{-1}(\dlc{X})\otimes L}{h^{-1}(\dlc{X})\otimes\sshf{X}(H)\otimes L}{h^{-1}(\dlc{X})\otimes\sshf{D}(H)\otimes L}.
\end{equation*}
Thus $\alpha$ is injective. It then follows that
\begin{equation*}
    E^{-1, 0}_{\infty}\iso H^0(X, h^{-1}(\dlc{X})\otimes L),
\end{equation*}
which in turn implies that
\begin{equation*}
    H^0(X, h^{-1}(\dlc{X})\otimes L)\rightarrow H^2(X, h^{-2}(\dlc{X})\otimes L).
\end{equation*}
is zero.

(2) The case $\dim X=2$ has been treated in Lemma \ref{base
  case}. When $\dim X=3$, we just show that
$H^2(X, h^{-2}(\dlc{X})\otimes L)=0$ when
$\text{codim}(X_{\text{Sing}})=1$. The other cases are left  to the interested reader.

By Theorem \ref{spectral sequence}, we deduce that the natural map
\begin{equation*}
    H^0(X, h^{-1}(\dlc{X})\otimes L)\rightarrow H^2(X, h^{-2}(\dlc{X})\otimes L)
\end{equation*}
surjects. But by (1) above, the map is indeed zero; so the assertion follows.
\end{proof}

\section{Comments}
(4.1) By Serre's normality criterion, Theorem \ref{AJvanishing} implies that
\begin{corollary}[Mumford {\cite{mumford}}]\label{cor:mumford}
  Suppose that $X$ is a normal projective variety of dimension at least 2, defined over a field of characteristic zero, and that $L$ is an  ample line bundle.
  Then $H^1(X, L^{-1})=0$.
\end{corollary}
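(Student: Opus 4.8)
The plan is to deduce the statement directly from Theorem~\ref{AJvanishing} applied with $k=1$, the only work being to verify its two hypotheses via Serre's criterion. First I would note that an ample line bundle is in particular nef and big, so the hypothesis on $L$ is met with no further argument.

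Next I would invoke Serre's criterion for normality: a Noetherian scheme is normal if and only if it satisfies both $R_1$, regularity in codimension one, and $S_2$. Since $X$ is normal, the condition $S_2$ is exactly Serre's condition $S_{k+1}$ for $k=1$, while $R_1$ forces the singular locus to have codimension at least two, giving $1<\text{codim}(X_{sing})$. Here the assumption $\dim X\ge 2$ is precisely what keeps this inequality within the useful range: a normal curve is automatically smooth, and excluding $\dim X=1$ rules out the degenerate case $k=\dim X$, in which $h^{-k}(\dlc{X})\cong\omega_X$ is nonzero and the conclusion of Theorem~\ref{AJvanishing} would fail. With both hypotheses in hand, Theorem~\ref{AJvanishing} returns $H^1(X,L^{-1})=0$.

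The one point needing slight care is that Theorem~\ref{AJvanishing} is stated over an algebraically closed field, while here $X$ is only assumed defined over a field $k$ of characteristic zero. I would handle this by base change to $\bar k$: since the characteristic is zero the extension $\bar k/k$ is separable, so $X_{\bar k}$ remains normal, projective, and of dimension at least two, and $L_{\bar k}$ remains ample; flat base change then identifies $H^1(X_{\bar k},L_{\bar k}^{-1})$ with $H^1(X,L^{-1})\otimes_k\bar k$, and the vanishing descends. I do not anticipate a genuine obstacle here: the substance is entirely the translation of normality into the hypotheses of Theorem~\ref{AJvanishing}, and the only mildly technical point is that geometric normality is preserved under base change in characteristic zero.
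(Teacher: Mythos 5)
Your proposal is correct and matches the paper's own (one-line) argument: the paper likewise deduces the corollary from Theorem~\ref{AJvanishing} with $k=1$ via Serre's normality criterion ($R_1$ gives $\text{codim}(X_{sing})\ge 2$ and $S_2$ is exactly $S_{k+1}$). Your additional remarks on the role of $\dim X\ge 2$ and on descent from $\bar k$ are sound refinements of points the paper leaves implicit.
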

This statement was in fact the original inspiration for Theorem \ref{AJvanishing}. In the same paper, Mumford goes on to give an easy
two dimensional counterexample to this statement in positive
characteristic. This was decade before Raynaud \cite{raynaud} gave a rather
different counterexample to the usual Kodaira vanishing for smooth
projective varieties.

(4.2) The depth condition is essential for the validity of Theorem \ref{AJvanishing}, while the condition on the dimension of the singular locus is not. Consider a projective variety $X$ with rational singularities. It is well-known that for any $i<\dim X$ and $L$ ample line bundle, the vanishing holds
\begin{equation*}
    H^i(X, L^{-1})=0.
\end{equation*}
To exhibit such a variety but with relatively small codimension of the singular locus, take a smooth projective variety $Y$ of dimension $n$ with the property that $H^i(Y, \sshf{Y})=0$ for all $i>0$, and fix a sufficiently ample line bundle $M$ on $Y$. Then the secant variety
\begin{equation*}
    \Sigma=\Sigma(Y, M)\subset\mathbb{P}(H^0(Y, M))
\end{equation*}
is $2n+1$ dimensional with rational singularities (cf.~\cite{ChouSong18}), and the singular locus is precisely $X\subset\Sigma$. Thus $\text{codim}(\Sigma_{\text{Sing}})=n+1<2n+1$.

\bibliography{bibliography}{}
\bibliographystyle{alpha}

\end{document}